\newtheorem{theorem}{Theorem}[section]
\newtheorem{proposition}[theorem]{Proposition}
\newtheorem{lemma}[theorem]{Lemma}
\newtheorem{corollary}[theorem]{Corollary}        
\theoremstyle{definition}
\newtheorem{remark}[theorem]{Remark} 
\DeclareMathOperator{\Inn}{Inn}
\DeclareMathOperator{\Aut}{Aut}
\DeclareMathOperator{\Sym}{Sym}
\DeclareMathOperator{\GL}{GL}
\DeclareMathOperator{\AGL}{AGL}
\DeclareMathOperator{\soc}{soc}
\DeclareMathOperator{\pndr}{\nu}
\title{Derangements in permutation groups  with two orbits} 
\author{Melissa Lee}
\address{School of Mathematics, Monash University, Clayton VIC 3800, Australia}
\email{melissa.lee@monash.edu}
\author{Tomasz Popiel}
\address{School of Mathematics, Monash University, Clayton VIC 3800, Australia}
\email{tomasz.popiel@monash.edu}
\author{Gabriel Verret}
\address{Department of Mathematics, University of Auckland, Private Bag 92019, Auckland 1142, New Zealand}
\email{g.verret@auckland.ac.nz}
\begin{document}

\begin{abstract}
A classical theorem of Jordan asserts that if a group $G$ acts transitively on a finite set of size at least $2$, then $G$ contains a derangement (a fixed-point free element). Generalisations of Jordan's theorem have been studied extensively, due in part to their applications in graph theory, number theory and topology. We address a generalisation conjectured recently by Ellis and Harper, which says that if $G$ has exactly two orbits and those orbits have equal length $n \geq 2$, then $G$ contains a derangement. We prove this conjecture in the case where $n$ is a product of two primes, and verify it computationally for $n \leq 30$.
\end{abstract}

\maketitle

\section{Introduction}
We assume throughout  that all  groups and sets are finite. Let $G$ be a group acting on a set $\Omega$. A {\em derangement} is an element of $G$ that fixes no point of $\Omega$. A classical theorem of Jordan \cite{Jordan1872} asserts that if $G$ acts transitively on $\Omega$ and $|\Omega|\geq 2$, then $G$ contains a derangement. Equivalently, a group is never equal to the union of the conjugates of a proper subgroup. Generalisations of this result have been and continue to be studied intensively, due in part to numerous well-known applications \cite{serre}. This is intended to be a short note on one such generalisation, so we refer the reader to e.g. \cite[Chapter~1]{BG} for further historical background. 

The transitivity assumption in Jordan's theorem is necessary: it is easy to construct examples with $G$ having exactly two orbits but no derangement. On the other hand, Ellis and Harper have conjectured  that this cannot happen if the two orbits have equal length $n \geq 2$; see \cite[Conjecture~2]{harper_ellis}. They have proved this conjecture in various cases, including 
\begin{itemize} 
    \item[(i)] when $G$ acts primitively on one of its two orbits;
    \item[(ii)] when $G$ is simple, nilpotent, or of order at most $1000$;
    \item[(iii)] when $n$ is a prime power.
\end{itemize}

The main aim of this note is to prove Ellis and Harper's conjecture under the assumption that $n$ is a product of two primes; see \Cref{mainCor}. We also verify computationally that the conjecture holds when $ n \leq 30$; see \Cref{computations}.

\section{Preliminaries}
\begin{lemma}[{\cite[Lemma 2.2]{friends}}]\label{friends}
Let $G$ be a group acting transitively on a set $\Omega$, let $p$ be a prime and let $P$ be a Sylow $p$-subgroup of $G$. The minimum length of a $P$-orbit on $\Omega$ is the largest power of $p$ dividing $|\Omega|$.
\end{lemma}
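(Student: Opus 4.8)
The plan is to prove the two inequalities separately: that every $P$-orbit has length at least $p^a$, where $p^a$ is the largest power of $p$ dividing $|\Omega|$, and that at least one $P$-orbit attains this length. Throughout I write $v_p$ for the $p$-adic valuation, so that $a = v_p(|\Omega|)$, and I let $p^k = |P|$ be the full $p$-part of $|G|$.

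First I would fix a point $\omega \in \Omega$ and set $H = G_\omega$, so that transitivity gives $|\Omega| = |G:H|$. From $|G| = |\Omega|\,|H|$ and $v_p(|G|) = k$ I obtain $v_p(|H|) = k - a$. The $P$-orbit of $\omega$ has length $|P : P \cap H|$, and since $P \cap H$ is a $p$-subgroup of $H$ we have $|P \cap H| \le p^{v_p(|H|)} = p^{k-a}$. Hence the orbit length is $|P|/|P \cap H| \ge p^{k}/p^{k-a} = p^a$, which establishes the lower bound uniformly over all orbits. Note that equality holds exactly when $P \cap H$ is a Sylow $p$-subgroup of $H$.

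For the reverse direction I would show that some orbit has length exactly $p^a$. The cleanest route is a counting argument modulo $p$. Since every $P$-orbit has length a power of $p$ and, by the previous step, at least $p^a$, reducing the orbit decomposition $|\Omega| = \sum_{\text{orbits}} (\text{length})$ modulo $p^{a+1}$ annihilates every contribution from orbits of length $\ge p^{a+1}$, leaving $|\Omega| \equiv N p^a \pmod{p^{a+1}}$, where $N$ is the number of orbits of length exactly $p^a$. On the other hand $|\Omega| = p^a m$ with $p \nmid m$, so $N \equiv m \not\equiv 0 \pmod p$, forcing $N \ge 1$. Combining the two halves, the minimum $P$-orbit length is exactly $p^a$.

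I expect the existence half, rather than the lower bound, to be the main obstacle. The lower bound is a direct Sylow-and-valuation estimate, whereas guaranteeing that the minimum is actually achieved requires the extra observation that $N$ is forced to be nonzero modulo $p$. One could instead argue directly, by enlarging $P \cap H$ to a Sylow $p$-subgroup $Q$ of $H$, embedding $Q$ into a Sylow $p$-subgroup of $G$ conjugate to $P$, and transporting to the corresponding point stabiliser; but this involves fiddly conjugation bookkeeping that the modular count sidesteps, so I would favour the counting argument.
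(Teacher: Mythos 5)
Your proposal is correct. Note, however, that there is no in-paper proof to compare it against: the paper imports this lemma verbatim from the cited reference (Lemma~2.2 of \cite{friends}) and gives no argument of its own, so your write-up is effectively supplying a proof the paper omits. Both halves of your argument check out: the lower bound is exactly the Sylow/valuation estimate $|P:P\cap G_\omega|\geq p^a$ (valid for every $\omega$, since transitivity makes all point stabilisers conjugate and hence of equal order), and the existence half works because every $P$-orbit has length a power of $p$, so summing orbit lengths modulo $p^{a+1}$ kills all orbits of length at least $p^{a+1}$ and yields $N\equiv m\not\equiv 0\pmod{p}$, forcing $N\geq 1$. The alternative you sketch at the end --- enlarge $P\cap G_\omega$ to a Sylow $p$-subgroup $Q$ of $G_\omega$, embed $Q$ in a Sylow $p$-subgroup of $G$, and conjugate back to $P$ --- is the route most often seen in the literature for this fact; it has the mild advantage of identifying \emph{which} points lie in minimal orbits (precisely those $\omega$ for which $P\cap G_\omega$ is Sylow in $G_\omega$), whereas your modular count is shorter and sidesteps the conjugation bookkeeping entirely. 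Either argument is a complete and acceptable proof of the lemma.
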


\begin{lemma}\label{DerangementsInPGroups1}
Let $V = \mathbb{F}_q^d$, where $d\geq 2$ and $q$ is a prime power. Fix a basis for $V$, let $0\neq a\in V$ and define $U=\{v\in V\mid a\cdot v=0\}$. If $k$ is the number of non-zero coefficients of $a$ with respect to the fixed basis, then the number of $(u_1,\dots,u_d) \in U$ such that $u_1,\dots,u_d \neq 0$ is  equal to
\[\frac{(q-1)^{d-k+1}}{q} \left( (q-1)^{k-1}-(-1)^{k-1} \right).
\] 
In particular, this number is at most $(q-1)^{d-1}$. 
\end{lemma}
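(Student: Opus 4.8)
The plan is to reduce the count to a one-variable problem by exploiting the fact that only the coordinates in which $a$ is nonzero enter the defining equation. First I would reorder the fixed basis so that $a=(a_1,\dots,a_k,0,\dots,0)$ with $a_1,\dots,a_k$ all nonzero. Permuting the basis permutes the coordinates of $a$ and of $v$ simultaneously, so it preserves both the equation $a\cdot v=0$ and the property that all coordinates of $v$ are nonzero; hence it does not change the quantity to be computed. After this normalisation the condition $a\cdot v=0$ reads $a_1u_1+\dots+a_ku_k=0$ and involves none of $u_{k+1},\dots,u_d$, so those $d-k$ coordinates range freely over $\mathbb{F}_q\setminus\{0\}$ and contribute a factor $(q-1)^{d-k}$.

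It then remains to count the tuples $(u_1,\dots,u_k)\in(\mathbb{F}_q\setminus\{0\})^k$ with $a_1u_1+\dots+a_ku_k=0$. Substituting $w_i=a_iu_i$, which is a bijection of $\mathbb{F}_q\setminus\{0\}$ since $a_i\neq 0$, this equals $N_k:=|\{(w_1,\dots,w_k)\in(\mathbb{F}_q\setminus\{0\})^k : w_1+\dots+w_k=0\}|$. Evaluating $N_k$ is the crux of the argument, and I expect it to be the main (if routine) obstacle. I would handle it by the recursion $N_k=(q-1)^{k-1}-N_{k-1}$ with $N_1=0$: conditioning on the value of $w_k\neq 0$, the remaining $k-1$ coordinates must sum to the fixed nonzero element $-w_k$, and by scaling all coordinates by a common nonzero constant one sees that the number of tuples in $(\mathbb{F}_q\setminus\{0\})^{k-1}$ summing to a given nonzero target is independent of that target, hence equal to $((q-1)^{k-1}-N_{k-1})/(q-1)$; summing over the $q-1$ choices of $w_k$ gives the recursion. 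Solving it (a telescoping geometric sum, verified by induction) yields $N_k=\tfrac{q-1}{q}\bigl((q-1)^{k-1}+(-1)^k\bigr)$. An alternative route is to expand via the additive characters of $\mathbb{F}_q$, where the nontrivial characters each contribute $(-1)^k$ and the trivial one contributes $(q-1)^k$, giving the same closed form.

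Assembling the pieces gives the total count $(q-1)^{d-k}N_k$, which simplifies to the stated expression $\tfrac{(q-1)^{d-k+1}}{q}\bigl((q-1)^{k-1}-(-1)^{k-1}\bigr)$ after writing $(-1)^k=-(-1)^{k-1}$. For the final assertion I would first note that the count is $0$ when $k=1$ (here $a\cdot v=a_1v_1$ forces $v_1=0$), so the bound is trivial. For $k\geq 2$, dividing the desired inequality by the positive factor $(q-1)^{d-k+1}$ and using $q(q-1)^{k-2}=(q-1)^{k-1}+(q-1)^{k-2}$ reduces it to $(-1)^k\leq (q-1)^{k-2}$. This holds because the right-hand side is a nonnegative integer power of $q-1\geq 1$, hence at least $1$ when $k$ is even and trivially bounding $-1$ when $k$ is odd, completing the proof.
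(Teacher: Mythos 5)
Your proposal is correct, and it reaches the stated formula by a slightly different (and somewhat leaner) route than the paper. Both arguments share the same backbone: split off the $d-k$ coordinates outside the support of $a$ (factor $(q-1)^{d-k}$) and observe that the number of all-nonzero solutions of $\sum_{i=1}^k a_i x_i = m$ depends only on whether $m=0$ or $m\neq 0$. The paper then sets up a \emph{coupled} pair of recursions for the two counts $D_0(j)$ and $D_1(j)$ (zero target and nonzero target), eliminates $D_0$ to get the second-order linear recurrence $D_1(j)=(q-2)D_1(j-1)+(q-1)D_1(j-2)$, and solves it from the initial conditions $D_1(1)=1$, $D_1(2)=q-2$. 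You instead normalise via $w_i=a_iu_i$ and use the partition identity $N_{k-1}+(q-1)M_{k-1}=(q-1)^{k-1}$ (where $M_{k-1}$ is the common count for any nonzero target) to express $M_{k-1}$ in terms of $N_{k-1}$ directly; this collapses everything to the \emph{first-order} recursion $N_k=(q-1)^{k-1}-N_{k-1}$, $N_1=0$, which solves by telescoping rather than by characteristic roots. The two closed forms agree, since $\frac{q-1}{q}\bigl((q-1)^{k-1}+(-1)^k\bigr)=\frac{q-1}{q}\bigl((q-1)^{k-1}-(-1)^{k-1}\bigr)=D_0(k)$. What your version buys is a shorter derivation and no need for the theory of second-order recurrences; what the paper's version buys is that the auxiliary quantity $D_1(j)$ is computed explicitly along the way (though it is not needed later). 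You also spell out the final inequality, reducing it to $(-1)^k\leq(q-1)^{k-2}$ for $k\geq 2$ and checking $k=1$ separately, which the paper leaves as ``easily verified''; your verification is correct, including the boundary cases $q=2$ and $k=2$.
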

\begin{proof}
Given $j\geq 1$, $m\in \mathbb{F}_q$ and $(a_1,\ldots,a_j)\in \mathbb{F}_q^j$ with $a_1,\dots,a_j \neq 0$, consider the equation $\sum_{i=1}^j a_ix_i=m$. Call a solution $(x_1, \dots, x_j) \in \mathbb{F}_q^j$ to this equation `good' if $x_1, \dots, x_j \neq 0$. First notice that, because $\mathbb{F}_q$ is a field, the number of good solutions does not depend on $a_1,\dots,a_j$. Similarly, the number of good solutions depends only on whether $m=0$ or not. We may thus define $D_m(j)$ to be the number of good solutions to this and hence every equation of the given form. For $j\geq 2$, we have 
\[
D_0(j) = (q-1)D_{-a_jx_j}(j-1) = (q-1)D_1(j-1).
\]
It follows that, for $j\geq 2$, we have
\[
D_1(j)  = (q-2)D_1(j-1)+ D_0(j-1). 
\]
Combining these two equations yields, for $j \geq 3$, 
\[
D_1(j) =  (q-2)D_1(j-1)+(q-1)D_1(j-2).
\]
This is a second-order linear difference equation for $D_1(j)$; the initial conditions  $D_1(1)=1$ and  $D_1(2)=q-2$ yield
\[
D_1(j) = \frac{(q-1)^{j}-(-1)^j}{q}, 
\qquad \text{and hence} \qquad
D_0(j) = (q-1)\frac{(q-1)^{j-1}-(-1)^{j-1}}{q}
\]
for all $j\geq 1$. The first assertion of the lemma follows upon observing that the number of  $(u_1,\dots,u_d)\in U$ such that  $u_1,\dots,u_d \neq 0$ is equal to $(q-1)^{d-k}D_0(k)$. Since $a\neq 0$, we have $k\geq 1$ and the second assertion is then easily verified. 
\end{proof}

\begin{lemma}\label{DerangementsInPGroups2}
Let $q$ be a prime power and let $V = \mathbb{F}_q^d$. 
If $\mathcal{W}$ is a set of subspaces of $V$ of codimension $1$ such that 
\begin{enumerate}
\item $\bigcup_{W\in \mathcal{W}} W = V$ and 
\item $\bigcap_{W\in \mathcal{W}} W = \{0\}$,
\end{enumerate}
then $2 \leq d \leq |\mathcal{W}|-q+1$.
\end{lemma}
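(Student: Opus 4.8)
The plan is to pass to the dual space, where condition (2) becomes a spanning condition, and to use that spanning condition to normalise $d$ of the hyperplanes to be coordinate hyperplanes; this reduces the covering condition (1) to covering the set of vectors with all coordinates nonzero, on which \Cref{DerangementsInPGroups1} gives exactly the bound I need. To begin, write each $W\in\mathcal{W}$ as $W=\ker a_W$ for a nonzero linear functional $a_W\in V^\ast$. Condition (2) is then equivalent to the statement that $\{a_W:W\in\mathcal{W}\}$ spans $V^\ast$. The lower bound is immediate: if $d=1$, the only codimension-$1$ subspace of $V$ is $\{0\}$, so condition (1) would force $V=\{0\}$, contradicting $q\geq 2$; hence $d\geq 2$.

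For the upper bound, I would use the spanning property to select, after reordering, functionals $a_1,\dots,a_d$ forming a basis of $V^\ast$, and then change coordinates on $V$ so that these become the standard coordinate functionals. The associated hyperplanes $W_1,\dots,W_d$ are the coordinate hyperplanes $\{x:x_i=0\}$, whose union is precisely the set of vectors with at least one zero coordinate. Therefore the set $T=(\mathbb{F}_q\setminus\{0\})^d$ of vectors with all coordinates nonzero, which has $(q-1)^d$ elements, is disjoint from $W_1\cup\dots\cup W_d$; by condition (1) it must be covered by the remaining $|\mathcal{W}|-d$ hyperplanes.

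It then remains only to bound how many points of $T$ a single hyperplane can contain, and this is exactly the ``in particular'' clause of \Cref{DerangementsInPGroups1}: for any nonzero $a\in V^\ast$, the number of vectors of $\ker a$ with all coordinates nonzero is at most $(q-1)^{d-1}$. Since the $|\mathcal{W}|-d$ remaining hyperplanes cover $T$, I obtain
\[
(q-1)^d=|T|\leq (|\mathcal{W}|-d)\,(q-1)^{d-1}.
\]
Dividing by $(q-1)^{d-1}>0$ yields $|\mathcal{W}|-d\geq q-1$, that is, $d\leq|\mathcal{W}|-q+1$, completing the proof.

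The substance of the argument is the reduction to the all-nonzero set $T$ combined with the uniform bound from \Cref{DerangementsInPGroups1}; the only place requiring care is to ensure both hypotheses are genuinely used, namely condition (2) to produce the coordinate hyperplanes and condition (1) to force $T$ to be covered. I expect the main temptation to avoid is a direct polynomial-method attack (expanding $\prod_{W\in\mathcal{W}}a_W$ into a form vanishing on all of $\mathbb{F}_q^d$ and invoking the Combinatorial Nullstellensatz), which founders on controlling the coefficients of a product of arbitrary linear forms; the route above sidesteps this entirely.
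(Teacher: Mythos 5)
Your proof is correct and is essentially the same as the paper's: both arguments reduce to a coordinate system in which $d$ of the hyperplanes become $\{x : x_i = 0\}$, observe that the $(q-1)^d$ vectors with all coordinates nonzero remain uncovered, and then apply the bound of at most $(q-1)^{d-1}$ such vectors per additional hyperplane from \Cref{DerangementsInPGroups1} to force $|\mathcal{W}| \geq d + (q-1)$. The only (cosmetic) difference is how the $d$ independent hyperplanes are produced: you dualise and extract a basis of $V^\ast$ from the spanning set of functionals, whereas the paper builds them iteratively, repeatedly using Condition~2 to find a hyperplane not containing the current intersection.
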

\begin{proof}
The assertion that $d \geq 2$ is immediate, because Condition~1 is never satisfied if $d\leq 1$. Let $W_1\in \mathcal{W}$. If $W_1\neq \{0\}$, then, by Condition~2, there exists $W_2\in \mathcal{W}$ such that $W_1\nleq W_2$. It follows that $W_1+W_2=V$ and $\dim(W_1+W_2)=\dim(W_1)+\dim(W_2)-\dim(V)=2(d-1)-d=d-2$, so $W_1\cap W_2$ has codimension $2$ in $V$. If $W_1\cap W_2\neq \{0\}$, then, by Condition~2, there exists $W_3\in \mathcal{W}$ such that $W_1\cap W_2\nleq W_3$. Since $W_3$ has codimension $1$ in $V$, this implies that $W_3+(W_1\cap W_2)=V$ and, by a similar calculation as earlier, $W_1\cap W_2\cap W_3$ has codimension $3$ in $V$. Repeating this procedure, we find that $\mathcal{W}$ contains $d$ subspaces $W_1,\ldots,W_d$ with $\bigcap_{i\in \{1,\ldots,d\}} W_i = \{0\}$. By choosing an appropriate basis for $V$, we can assume that $W_i$ is defined by the linear equation $x_i=0$. Note that  $\bigcup_{i\in\{1,\ldots,d\} } W_i$ consists of the vectors in $V$ with at least one coordinate equal to $0$, so $|\bigcup_{i\in\{1,\ldots,d\} } W_i|=q^d-(q-1)^d$. This leaves $(q-1)^d$ elements of $V$ to `cover' by adjoining further subspaces from $\mathcal{W}$, in order to satisfy Condition 1. \Cref{DerangementsInPGroups1} implies that adjoining one further subspace from $\mathcal{W}$ covers at most $(q-1)^{d-1}$ further elements.  We must therefore adjoin at least another $q-1$ subspaces in order to satisfy Condition 2, so $|\mathcal{W}| \geq d + (q-1)$.
\end{proof}

\begin{remark}
Note that the upper bound in~\Cref{DerangementsInPGroups2} is tight. Indeed, for $i\in\{1,\ldots, d\}$, let $W_i$ be the subspace defined by $x_i=0$. Note that $\bigcap_{i\in\{1,\ldots d\}} W_i = \{0\}$. Let $\mathcal{U}$ be the set of subspaces that are strictly between $V$ and $W_1\cap W_2$. Since $W_1\cap W_2$ has codimension $2$ in $V$, we have $|\mathcal{U}|=q+1$. It is easy to check that $\bigcup_{U\in \mathcal{U}} U = V$ and that $\mathcal{U} \cap \{W_1,\ldots,W_d\}=\{W_1,W_2\}$, hence $\mathcal{U} \cup \{W_1,\ldots,W_d\}$ has size $d+q-1$ and satisfies both conditions of~\Cref{DerangementsInPGroups2}. Moreover, it is not hard to show using \Cref{DerangementsInPGroups1} that, up to conjugacy in $\GL(V)$, this is the unique tight example, but we will not need this fact.
\end{remark}

Given a group $G$ acting (not necessarily faithfully) on a set $\Omega$ and $\omega\in\Omega$, we write $G_\omega$ for the point stabiliser of $\omega$ in $\Omega$. Given $g\in G$ and a subset $\Delta\subseteq\Omega$ preserved by $G$, we write $G_\Delta$ for the setwise stabiliser of $\Delta$ in $G$ and $g^\Delta$ for the permutation induced by $g$ on $\Delta$. We also let $G^\Delta:=\{g^\Delta\colon g\in G\}$.

Let $n\geq 1$ and let $T$ be a non-abelian finite simple group. Recall that the full wreath product $\Aut(T)\wr S_n$ has socle $K := T_1 \times \dots \times T_n$, where each $T_i$ is isomorphic to $T$. Moreover, $\{T_1,\ldots,T_n\}$ is the set of minimal normal subgroups of $T_1\times\cdots\times T_n$, hence $\Aut(T)\wr S_n$ acts on $\{T_1,\ldots,T_n\}$ by conjugation.  Given $G\leq \Aut(T)\wr S_n$, the stabiliser $G_{T_1}$ of $T_1$ acts on $T_1$, and the induced permutation group $G_{T_1}^{T_1}$ is naturally identified with a subgroup of $ \Aut(T_1)$. 
In particular, we have $T_1^{T_1}=\Inn(T_1)$ under this identification.

\begin{proposition}\label{WreathProducts}
With notation as above, if
\begin{enumerate}
\item $\Inn(T_1)\leq G_{T_1}^{T_1}$ and 
\item $G$ acts primitively on $\{T_1,\ldots,T_n\}$,
\end{enumerate}
then either $G\cap K=1$, $G\cap K\cong T$ or $K\leq G$.
\end{proposition}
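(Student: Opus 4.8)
The plan is to analyse the normal subgroup $N := G\cap K$ of $G$ by studying its projections to the simple direct factors $T_i$ of $K$, and then to use the primitivity hypothesis to pin down its global structure. Throughout, I would write $\pi_i\colon K\to T_i$ for the $i$-th coordinate projection, and identify $T_i$ with $\Inn(T_i)$ as in the paragraph preceding the statement.

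First I would record that Condition~2 makes $G$ transitive on $\{T_1,\dots,T_n\}$, so conjugation by a suitable element of $G$ carries $G_{T_1}^{T_1}$ isomorphically onto $G_{T_i}^{T_i}$ and $\Inn(T_1)$ onto $\Inn(T_i)$; hence Condition~1 upgrades to $\Inn(T_i)\leq G_{T_i}^{T_i}$ for every $i$. The key local computation is that, for $g\in G_{T_i}$ and $x\in N$, the $i$-th coordinate of $gxg^{-1}$ equals $g^{T_i}(\pi_i(x))$, because $g$ fixes the factor $T_i$ and acts on it as the automorphism $g^{T_i}$. Since $N$ is normal in $G$, this shows that $\pi_i(N)$ is invariant under $G_{T_i}^{T_i}$, and therefore under $\Inn(T_i)$; as $T_i$ is simple, this forces $\pi_i(N)\in\{1,T_i\}$.

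Next I would set $S:=\{i : \pi_i(N)=T_i\}$. Because $G$ permutes the factors $T_i$ and normalises $N$, the set $S$ is $G$-invariant, so transitivity gives $S=\emptyset$ or $S=\{1,\dots,n\}$. The former yields $N=1$, i.e.\ $G\cap K=1$. In the latter case $N$ is a subdirect product of the non-abelian simple groups $T_1,\dots,T_n$, and here I would invoke the standard structure theorem for such subgroups: there is a partition of $\{1,\dots,n\}$ into blocks $B_1,\dots,B_r$ such that $N=D_1\times\cdots\times D_r$, where each $D_\ell$ is a full diagonal subgroup of $\prod_{i\in B_\ell}T_i$ and is isomorphic to $T$. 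The $D_\ell$ are exactly the minimal normal subgroups of $N$, so conjugation by $G$ permutes them; since $g\in G$ sends the support of $D_\ell$ to the image of $B_\ell$ under the induced permutation of $\{1,\dots,n\}$, the partition $\{B_1,\dots,B_r\}$ is a block system for the action of $G$ on $\{T_1,\dots,T_n\}$. Primitivity (Condition~2) then forces this system to be trivial: if the blocks are singletons then each $D_\ell=T_i$ and $N=K$, giving $K\leq G$; if there is a single block then $N=D_1\cong T$, giving $G\cap K\cong T$.

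The routine part is the coordinate computation and the invariance of $S$; the substantive input is the structure theorem for subdirect products of non-abelian simple groups, together with the observation that the block-decomposition of $N$ is compatible with the $G$-action on the factors, so that primitivity can be applied. The main thing to get right is this compatibility: one must check that conjugation by $G$ genuinely permutes the diagonal factors $D_\ell$ according to its action on the indices, so that $\{B_1,\dots,B_r\}$ really is $G$-invariant and primitivity leaves only the two extreme partitions.
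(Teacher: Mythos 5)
Your proof is correct, and its opening move coincides with the paper's: you show each projection $\pi_i(G\cap K)$ is invariant under $G_{T_i}^{T_i}$, hence under $\Inn(T_i)$, hence trivial or all of $T_i$ by simplicity, and transitivity rules out a mixture, with the all-trivial case giving $G\cap K=1$. Where you diverge is in handling the subdirect case. The paper proceeds by hand: it sets $G_i=G\cap K_i$ (the kernels of the coordinate projections restricted to $G\cap K$), observes that the relation $T_i\sim T_j$ if and only if $G_i=G_j$ is a $G$-invariant equivalence relation, applies primitivity, and then treats the two resulting cases directly---injectivity of $\varphi_1$ gives $G\cap K\cong T$ when the relation is universal, and an induction showing $(G\cap K)/(G_1\cap\dots\cap G_m)\cong T^m$ gives $K\leq G$ when the classes are singletons. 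You instead invoke the standard structure theorem for subdirect products of non-abelian simple groups (Scott's lemma; see e.g.\ Dixon--Mortimer, \emph{Permutation Groups}) to decompose $G\cap K$ as $D_1\times\cdots\times D_r$ with each $D_\ell$ a full diagonal subgroup supported on a block $B_\ell$ of a partition, check that this partition is a block system because conjugation by $G$ permutes the $D_\ell$ (the minimal normal subgroups of $G\cap K$) compatibly with its action on the indices, and let primitivity collapse the partition to one of the two trivial ones. The two invariant partitions are in fact the same---the paper's equivalence classes are exactly your supports $B_\ell$---so the proofs share their skeleton; the real difference is that the paper's Case~2 induction is, in effect, a self-contained proof of precisely the instance of the structure theorem you cite. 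Your route is shorter and makes the conceptual picture (diagonal factors permuted by $G$) explicit, at the price of an external reference; the paper's version stays elementary and self-contained, which is presumably why the authors chose it.
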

\begin{proof}
Let $\varphi_i$ be the natural projection from $K$ to $T_i$ and let 
$$K_i := \mathrm{ker}\varphi_i=T_1\times\cdots \times T_{i-1}\times 1 \times T_{i+1}\times\cdots\times T_n.$$ 
Note that $\Inn(T_1)$ is the unique minimal normal subgroup of $G_{T_1}^{T_1}$, because $\Inn(T_1)\leq G_{T_1}^{T_1}\leq \Aut(T_1)$. Since $G\cap K$ is a normal subgroup of $G_{T_1}$, it follows that $(G\cap K)^{T_1}$ is normal in $G_{T_1}^{T_1}$, and so either $(G\cap K)^{T_1}=1$ or $\Inn(T_1)\leq (G\cap K)^{T_1}$. If $(G\cap K)^{T_1}=1$, then $G\cap K\leq K_1$. Since $G$ acts transitively on $\{T_1,\ldots,T_n\}$, this implies that $G\cap K\leq K_i$ for every $i\in\{1,\ldots,n\}$, and thus $G\cap K=1$, as required. We may thus assume that $\Inn(T_1)\leq (G\cap K)^{T_1}$, i.e. the restriction $\varphi_1:(G\cap K)\to T_1$ is surjective and thus $(G\cap K)/(G\cap K_1)\cong T$. 

For $i\in\{1,\ldots,n\}$, let $G_i=G\cap K_i$. Define an equivalence relation $\sim$ on $\{T_1,\ldots,T_n\}$ by $T_i \sim T_j$ if and only if $G_i = G_j$.  This equivalence relation is $G$-invariant so, by Condition~(2), the induced partition is either the universal one or the partition into singletons. We now consider these two cases separately.

Case 1: $G_i = G_j$ for all $i,j\in \{1,\ldots,n\}$. Since $K_1\cap\cdots \cap K_n=1$, we have $G_1=1$, so the restriction $\varphi_1:G\cap K\to T_1$ is injective. We saw earlier that it is surjective, hence it is an isomorphism and $G\cap K\cong T$, as required.

Case 2: $G_i \neq G_j$ for all $i,j$ with $i\neq j$. We saw earlier that $(G\cap K)/G_1\cong T$. Together with Condition~2,  this implies that $(G\cap K)/G_i\cong T$ for all $i\in\{1,\ldots,n\}$. We now proceed by induction. Suppose that $1\leq m<n$ is such that $(G\cap K)/(G_1  \cap \dots \cap G_m)\cong T^m$. Let $N=(G_1  \cap \dots\cap G_m) G_{m+1}$. Note that  $(G_1  \cap \dots\cap G_m)$ and $G_{m+1}$ are both normal subgroups of $G\cap K$, hence so is $N$. Since $(G\cap K)/G_{m+1}\cong T$ is simple, we must have $N=G\cap K$ or $N = G_{m+1}$. If $N = G_{m+1}$, then $G_1 \cap \dots\cap G_m \leq G_{m+1}$. Since $(G\cap K) /(G_1  \cap \dots\cap G_m) \cong T^m$ has precisely $m$ normal subgroups of index $|T|$, it follows that $G \cap K$ has precisely $m$ normal subgroups of index $|T|$ containing $G_1 \cap \dots \cap G_m$. The latter are precisely $G_1,\dots,G_m$, so we must have $G_{m+1}=G_i$ for some $i \in \{1,\dots,m\}$, which is a contradiction. Therefore, $N = G \cap K$. It follows that
\[
(G\cap K)/(G_1 \cap \dots \cap G_{m+1})\cong (G\cap K)/(G_1 \cap \dots \cap G_m)\times (G\cap K)/G_{m+1} \cong T^m \times T \cong T^{m+1}.
\]
This completes the induction and yields $(G\cap K)/(G_1 \cap \dots \cap G_n) \cong T^n$. Since $G \cap K \leq K \cong T^n$, it follows that $G \cap K=K$ and hence $K \leq G$, as required.
\end{proof}

Given a group $G$ acting on a set $\Omega$, let 
$$\pndr(G)=\frac{|\bigcup_{\omega \in \Omega} G_\omega|}{|G|}.$$ 
Note that this is exactly the proportion of non-derangements in $G$. 

\begin{lemma} \label{lemma:proportion}
If $G$ is a group acting on a set $\Omega=\Omega_1\cup\cdots\cup\Omega_k$ where each of the subsets $\Omega_1,\dots,\Omega_k$ is preserved by $G$, then
$$\pndr(G)\leq \sum_{i\in\{1,\ldots,k\}} \pndr(G^{\Omega_i}).$$
\end{lemma}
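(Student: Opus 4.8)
The plan is to rewrite the set of non-derangements of $G$ acting on $\Omega$ as a union of $k$ sets indexed by the $G$-invariant pieces $\Omega_1,\dots,\Omega_k$, and then apply the elementary union bound, transferring each term to the induced group $G^{\Omega_i}$ via the natural projection. The only genuine work is bookkeeping with fibre sizes; there is no deep obstacle, and the inequality (rather than equality) arises precisely from overcounting in the union bound.

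First I would fix, for each $i\in\{1,\dots,k\}$, the natural surjective homomorphism $\pi_i\colon G\to G^{\Omega_i}$ given by $g\mapsto g^{\Omega_i}$, and let $D_i\subseteq G^{\Omega_i}$ denote the set of non-derangements of $G^{\Omega_i}$, that is, those $h\in G^{\Omega_i}$ fixing at least one point of $\Omega_i$. Since each $\Omega_i$ is preserved by $G$ and $\Omega=\Omega_1\cup\cdots\cup\Omega_k$, an element $g\in G$ fixes a point of $\Omega$ if and only if it fixes a point of some $\Omega_i$, which happens if and only if $\pi_i(g)=g^{\Omega_i}\in D_i$ for some $i$. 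Hence the set $\bigcup_{\omega\in\Omega}G_\omega$ of non-derangements of $G$ equals $\bigcup_{i\in\{1,\dots,k\}}\pi_i^{-1}(D_i)$.

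Next I would apply subadditivity of cardinality (the union bound) to obtain $\bigl|\bigcup_{i}\pi_i^{-1}(D_i)\bigr|\leq\sum_{i}|\pi_i^{-1}(D_i)|$, and then evaluate each summand. Because $\pi_i$ is surjective, every fibre of $\pi_i$ has the same size $|\ker\pi_i|=|G|/|G^{\Omega_i}|$, so $|\pi_i^{-1}(D_i)|=|D_i|\cdot|G|/|G^{\Omega_i}|$. Dividing through by $|G|$ therefore gives $|\pi_i^{-1}(D_i)|/|G|=|D_i|/|G^{\Omega_i}|=\pndr(G^{\Omega_i})$, where the last equality is just the definition of $\pndr$ applied to $G^{\Omega_i}$ acting on $\Omega_i$.

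Assembling these pieces yields
\[
\pndr(G)=\frac{\bigl|\bigcup_{\omega\in\Omega}G_\omega\bigr|}{|G|}
=\frac{\bigl|\bigcup_{i}\pi_i^{-1}(D_i)\bigr|}{|G|}
\leq\sum_{i\in\{1,\dots,k\}}\frac{|\pi_i^{-1}(D_i)|}{|G|}
=\sum_{i\in\{1,\dots,k\}}\pndr(G^{\Omega_i}),
\]
as required. The point to be careful about is that $\pndr$ is computed in two different ambient groups ($G$ for the left-hand side, and each $G^{\Omega_i}$ on the right), so one must verify the uniform fibre-size computation above to pass between them; this is the step I would write out most explicitly.
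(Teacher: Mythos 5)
Your proposal is correct and follows essentially the same argument as the paper: both apply the union bound over the invariant pieces $\Omega_1,\dots,\Omega_k$ and then transfer each term to $G^{\Omega_i}$ by noting that the non-derangements of $G$ on $\Omega_i$ form the full preimage of the non-derangements of $G^{\Omega_i}$ under the quotient map, whose fibres all have size $|K_i|=|\ker\pi_i|$, so the factor cancels. The paper phrases this as $|\bigcup_{\omega\in\Omega_i}G_\omega| = |\bigcup_{\omega\in\Omega_i}G_\omega^{\Omega_i}|\cdot|K_i|$ and $|G|=|G^{\Omega_i}|\cdot|K_i|$, which is exactly your fibre-size computation.
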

\begin{proof}

For each $i \in \{1,\dots,k\}$, let $K_i$ be the kernel of the action homomorphism from $G$ to $\Sym(\Omega_i)$. We have
\begin{align*}
\pndr(G)&=\frac{|\bigcup_{\omega \in \Omega} G_\omega|}{|G|} \leq \sum_{i\in\{1,\ldots,k\}} \frac{|\bigcup_{\omega \in \Omega_i} G_\omega|}{|G|} = \sum_{i\in\{1,\ldots,k\}} \frac{|\bigcup_{\omega \in \Omega_i} G_\omega^{\Omega_i}|\cdot |K_i|}{|G^{\Omega_i}|\cdot |K_i|}\\
&=\sum_{i\in\{1,\ldots,k\}} \frac{|\bigcup_{\omega \in \Omega_i} G_\omega^{\Omega_i}|}{|G^{\Omega_i}|}=\sum_{i\in\{1,\ldots,k\}} \pndr(G^{\Omega_i}),
\end{align*}
as required.
\end{proof}

\section{Main results}
Throughout this section, let $G$ be a permutation group on a set $\Omega$ of size $2n$, and assume that $G$ has exactly two orbits, $\Omega_1$ and $\Omega_2$, each of length $n$. 

\begin{proposition}\label{computations}
If $2\leq n\leq 30$, then $G$ has a derangement.
\end{proposition}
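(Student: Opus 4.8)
The plan is to reduce the statement to a finite computation over the transitive permutation groups of degree $n$, organised through the subdirect product structure of $G$, and then to verify the finitely many surviving cases by computer.

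First I would record how $G$ sits inside the direct product of its two induced actions. Set $G_1 := G^{\Omega_1}$ and $G_2 := G^{\Omega_2}$; both are transitive of degree $n$, and the map $g \mapsto (g^{\Omega_1}, g^{\Omega_2})$ embeds $G$ as a subgroup of $G_1 \times G_2$ surjecting onto each factor, that is, as a subdirect product. By Goursat's lemma, such a subgroup is encoded by a common quotient $Q$ together with surjections $\pi_i \colon G_i \to Q$, via
\[
G = \{(g_1, g_2) \in G_1 \times G_2 \mid \pi_1(g_1) = \pi_2(g_2)\}.
\]
Conversely, any such datum produces a permutation group with exactly two orbits of length $n$, since the two projections remain surjective and the orbits lie in disjoint copies of $\{1, \dots, n\}$. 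As the transitive groups of degree at most $30$ are tabulated in the transitive groups library, the collection of triples $(G_1, G_2, Q)$ to be inspected is finite.

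Next I would rephrase the existence of a derangement in these terms. An element $(g_1, g_2)$ fixes a point of $\Omega$ exactly when $g_1$ fixes a point of $\Omega_1$ or $g_2$ fixes a point of $\Omega_2$, so $(g_1, g_2)$ is a derangement if and only if $g_1$ and $g_2$ are derangements of $G_1$ and $G_2$ respectively. Writing $D_i$ for the set of derangements of $G_i$ (nonempty by Jordan's theorem, as $n \geq 2$), we conclude that $G$ has a derangement if and only if $\pi_1(D_1) \cap \pi_2(D_2) \neq \emptyset$ in $Q$. Since the number of fixed points is a class function, each $D_i$ is a union of conjugacy classes of $G_i$, and a surjection carries a conjugacy class of $G_i$ onto a full conjugacy class of $Q$. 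The criterion thus becomes purely combinatorial on class data: $G$ has a derangement if and only if some conjugacy class of $Q$ is the $\pi_1$-image of a derangement class of $G_1$ and simultaneously the $\pi_2$-image of a derangement class of $G_2$. This avoids enumerating group elements and keeps each test cheap even when $G_1$ or $G_2$ has large order.

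Finally I would run the enumeration, pruning first with \Cref{lemma:proportion}: whenever $\pndr(G_1) + \pndr(G_2) < 1$ we obtain $\pndr(G) < 1$ and hence a derangement regardless of the gluing, while the case $Q = 1$ gives $G = G_1 \times G_2$, which plainly contains a derangement. What remains is to loop over the finitely many ordered pairs $(G_1, G_2)$ and the common quotients $Q$ furnished by Goursat's lemma, checking the conjugacy-class criterion in each surviving case. The principal obstacle I anticipate is the sheer number of transitive groups of degree near $30$ together with the number of their Goursat amalgams; the class-based reformulation, the proportion bound, and the restriction to nontrivial $Q$ are precisely what render the search feasible, and correctness of the verification then reduces to confirming that the required common conjugacy class of $Q$ exists in every triple that survives the pruning.
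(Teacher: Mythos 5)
Your proposal is correct and follows the same overall strategy as the paper: view $G$ as a subdirect product of its two induced transitive actions $G_1 = G^{\Omega_1}$ and $G_2 = G^{\Omega_2}$, loop over pairs of transitive groups of degree $n \leq 30$ from the transitive groups library, prune pairs using \Cref{lemma:proportion}, and settle the survivors by a finite check. The differences are at the level of implementation, though two are worth noting. First, the paper additionally invokes \cite[Corollary~4]{harper_ellis} to restrict attention to pairs of \emph{imprimitive} transitive groups, which you do not; your version is therefore more self-contained but searches a larger (still finite, and essentially harmless) space, since primitivity of either induced action is precisely the case Ellis and Harper had already settled. Second, the paper enumerates the subdirect products of each surviving pair by traversing the upper part of the subgroup lattice of $G_1 \times G_2$, and then exhibits a derangement in each one by random search or by running through its conjugacy classes; you instead parametrise subdirect products by Goursat data $(Q, \pi_1, \pi_2)$ and test the class-theoretic criterion that some conjugacy class of $Q$ is simultaneously the $\pi_1$-image of a derangement class of $G_1$ and the $\pi_2$-image of a derangement class of $G_2$. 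That criterion is correct (derangements form a union of conjugacy classes, and a surjection maps a class onto a full class of the quotient), and it avoids ever constructing the glued group as a permutation group, so it is arguably the cheaper final check; the paper's lattice traversal, conversely, sidesteps the need to enumerate common quotients and the surjections onto them up to the appropriate equivalence, which is itself somewhat delicate to implement exhaustively. Either route constitutes a valid proof once the computation is carried out and reported.
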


\begin{proof}
Note that the transitive groups of degree at most $30$ are known \cite{MR2168238} and readily accessible in Magma~\cite{magma}. The proof is supported by the Magma code available on the first author's GitHub~\cite{mel_github}; the process that we now describe is carried out for each degree $n \in \{2,\dots,30\}$ using the function \texttt{HalfTransitiveDerangements}. If $G$ has no derangements, then \cite[Corollary~4]{harper_ellis} implies that there exist transitive but imprimitive permutation groups $G_1$ and $G_2$ of degree $n$ such that $G^{\Omega_i} \cong G_1$ for $i \in \{1,2\}$. We begin by constructing each transitive but imprimitive permutation group $H$ of degree $n$ and directly counting the number of non-derangements in $H$ (by computing the support of a representative of each conjugacy class of $H$). By \Cref{lemma:proportion}, the proportion of non-derangements in $G$ is bounded above by the sum of the proportions of the non-derangements in $G_1$ and $G_2$, which we compute for each pair $(G_1,G_2)$ using the aforementioned data. If this proportion is less than~$1$, then $G$ certainly has a derangement, so we discard the pair $(G_1,G_2)$. For each remaining pair $(G_1,G_2)$, we know that $G$ must be a subdirect product of $G_1$ and $G_2$. The property of being a subdirect product is preserved when taking supergroups, so we simply traverse the upper layers of the subgroup lattice of $G_1 \times G_2$, stopping whenever we find a group that is not a subdirect product, until we have constructed all subdirect products of $G_1$ and $G_2$ (up to conjugacy in $G_1\times G_2$). Finally, we exhibit a derangement in each subdirect product either by random search or by checking each conjugacy class. 
\end{proof}

\begin{lemma}\label{SameSizeOrbits}
Let $p$ be a prime, let $P$ be a Sylow $p$-subgroup of $G$ and let $n=bp^k$. If $b<p$, then $P$ has $2b$ orbits of length $p^k$.
\end{lemma}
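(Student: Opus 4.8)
The plan is to analyse the $P$-orbits on each of $\Omega_1$ and $\Omega_2$ separately by means of \Cref{friends}, and then to add up the two counts. Since $1\leq b<p$, the prime $p$ does not divide $b$, so $p^k$ is the largest power of $p$ dividing $n$; this is the quantity that \Cref{friends} will produce.

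First I would pass to the transitive action of $G$ on $\Omega_1$, with image $G^{\Omega_1}$. Because the image of a Sylow $p$-subgroup under a surjective homomorphism is again a Sylow $p$-subgroup, $P^{\Omega_1}$ is a Sylow $p$-subgroup of the transitive group $G^{\Omega_1}$. The $P$-orbits on $\Omega_1$ coincide with the $P^{\Omega_1}$-orbits, so \Cref{friends} tells us that the shortest such orbit has length $p^k$. Next I would run the counting argument: every $P$-orbit on $\Omega_1$ has length a power of $p$, each such length is at least $p^k$, and the lengths sum to $|\Omega_1|=n=bp^k$. Writing the orbit lengths as $p^{a_1},\dots,p^{a_r}$ with each $a_i\geq k$ and dividing through by $p^k$, we get $\sum_i p^{a_i-k}=b<p$. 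Since every summand is a power of $p$ and hence at least $1$, while any single summand exceeding $1$ would already be at least $p>b$, every $a_i$ must equal $k$; thus all orbits have length exactly $p^k$ and there are precisely $b$ of them. Applying the identical argument to $\Omega_2$ yields another $b$ orbits of length $p^k$, for a total of $2b$.

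The argument is short, and the only point that genuinely needs care is the reduction to \Cref{friends}: one must verify that passing to the image $P^{\Omega_1}$ preserves both the transitivity of the ambient group and the Sylow property of $P$, so that the quoted minimum-orbit-length statement applies to $\Omega_1$ (and symmetrically to $\Omega_2$). Once that is in place, the counting step is entirely elementary, driven solely by the hypothesis $b<p$, which forces all orbit lengths to collapse to the common value $p^k$.
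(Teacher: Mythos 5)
Your proof is correct and follows essentially the same route as the paper: apply \Cref{friends} to the transitive action on each orbit $\Omega_i$ to see that the minimum $P$-orbit length is $p^k$, then use $b<p$ (equivalently $n<p^{k+1}$) to force every orbit length to be exactly $p^k$, giving $b$ orbits on each $\Omega_i$ and $2b$ in total. The only difference is cosmetic: the paper applies \Cref{friends} directly to the (not necessarily faithful) transitive action of $G$ on $\Omega_i$, so your careful check that $P^{\Omega_1}$ is Sylow in $G^{\Omega_1}$, while valid, is not strictly needed.
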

\begin{proof}
By~\Cref{friends}, the minimum length of a $P^{\Omega_i}$-orbit is $p^k$. Since $b<p$, we have $n<p^{k+1}$, hence every $P^{\Omega_i}$-orbit has length exactly $p^k$ and the result follows.
\end{proof}

\begin{lemma}\label{NewBounds}
Let $p$ be a prime, let $P$ be a Sylow $p$-subgroup of $G$, let $|P|=p^d$ and let $n=bp$ with $b<p$. Then $P$ has $2b$ orbits of length $p$, $P$ is elementary abelian and $|\{P_\omega\mid\omega\in\Omega\}|\leq 2b$. Moreover, if $P$ does not contain a derangement, then $2\leq d\leq |\{P_\omega\mid\omega\in\Omega\}|-p+1\leq 2b-p+1$. In particular, if $b<(p+1)/2$, then $P$ contains a derangement.
\end{lemma}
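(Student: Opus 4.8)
The plan is to establish the four assertions in sequence, since each builds on the previous ones. First I would obtain the orbit structure: applying \Cref{SameSizeOrbits} with $k=1$ immediately gives that $P$ has $2b$ orbits of length $p$. Each such orbit has length $p$, so for each orbit $\Delta$ the induced group $P^\Delta$ is a transitive $p$-group of degree $p$, hence cyclic of order $p$; consequently every point stabiliser $P_\omega$ has index $p$ in its orbit's setwise stabiliser, and more to the point each $P_\omega$ is the kernel of the action on the orbit containing $\omega$.

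Next I would prove $P$ is elementary abelian. The key observation is that $P$ embeds into the product of its actions on the $2b$ orbits, $P\hookrightarrow \prod_\Delta P^\Delta$, where each factor $P^\Delta$ has order $p$. Thus $P$ is a subgroup of an elementary abelian $p$-group, so $P$ is itself elementary abelian. Writing $|P|=p^d$, we may therefore regard $P$ as a vector space $V=\mathbb{F}_p^d$, and each point stabiliser $P_\omega$, being the kernel of a homomorphism $P\to P^\Delta\cong \mathbb{F}_p$ onto a one-dimensional quotient, is a subspace of codimension $1$ in $V$ (codimension exactly $1$ since the action on $\Delta$ is nontrivial). The bound $|\{P_\omega\mid\omega\in\Omega\}|\leq 2b$ is then clear: points in the same orbit share a stabiliser, so the number of distinct stabilisers is at most the number of orbits, namely $2b$.

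For the main claim, suppose $P$ contains no derangement. Then $\bigcup_{\omega\in\Omega}P_\omega=P$, i.e. the codimension-$1$ subspaces $\{P_\omega\}$ cover $V$, giving Condition~1 of \Cref{DerangementsInPGroups2}. For Condition~2, note that $\bigcap_{\omega\in\Omega}P_\omega$ is the kernel of the action of $P$ on all of $\Omega$; since $P\leq G$ acts faithfully on $\Omega$, this intersection is trivial. Hence the set $\mathcal{W}=\{P_\omega\mid\omega\in\Omega\}$ of codimension-$1$ subspaces satisfies both hypotheses of \Cref{DerangementsInPGroups2}, which yields $2\leq d\leq |\mathcal{W}|-p+1$. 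Combined with $|\mathcal{W}|=|\{P_\omega\mid\omega\in\Omega\}|\leq 2b$ from the previous paragraph, this gives the chain $2\leq d\leq|\{P_\omega\mid\omega\in\Omega\}|-p+1\leq 2b-p+1$, as required.

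Finally, the contrapositive form gives the ``in particular'' statement: if $b<(p+1)/2$, then $2b-p+1<2$, so the inequality $2\leq 2b-p+1$ fails, meaning $P$ must contain a derangement. I expect the main obstacle to be the faithfulness bookkeeping: one must be careful that $P$ acts faithfully on $\Omega$ (so that $\bigcap_\omega P_\omega=1$ and each $P_\omega$ has the correct codimension), which holds because $P$ is a subgroup of the permutation group $G$ on $\Omega$, and to correctly match the combinatorial data (number of distinct stabilisers, equal to number of orbits at most $2b$) with the vector-space hypotheses of \Cref{DerangementsInPGroups2}.
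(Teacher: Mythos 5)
Your proposal is correct and follows essentially the same route as the paper: \Cref{SameSizeOrbits} gives the $2b$ orbits of length $p$, the stabilisers are then codimension-$1$ subspaces of the elementary abelian group $P$ viewed as $\mathbb{F}_p^d$, and \Cref{DerangementsInPGroups2} applied to the covering by these subspaces yields the bound. The only (harmless) variation is in how elementary abelianness is obtained — you embed $P$ into the product $\prod_\Delta P^\Delta$ of cyclic groups of order $p$, whereas the paper notes that each $P_\omega$ has index $p$, hence is maximal and normal, and that their trivial intersection forces the Frattini subgroup of $P$ to be trivial.
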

\begin{proof}
Note that $\bigcap_{\omega\in\Omega} P_\omega =1$ because $G$ is a permutation group on $\Omega$. By~\Cref{SameSizeOrbits}, $P$ has $2b$ orbits of length $p$. It follows that for every $\omega\in\Omega$, we have $|P:P_\omega|=p$, so $P_\omega$ is a normal (and maximal) subgroup of $P$. This implies that $P$ is elementary abelian (because $P$ has trivial Frattini subgroup) and that $|\{P_\omega\mid\omega\in\Omega\}|\leq 2b$. If $P$ does not contain a derangement, then $\bigcup_{\omega\in\Omega} P_\omega =P$ and~\Cref{DerangementsInPGroups2} implies that $2\leq d\leq |\{P_\omega\mid\omega\in\Omega\}|-p+1$. 
\end{proof}

\begin{proposition}\label{LastDay}
If $n=pq$ with $p>q$ primes and $q$ does not divide $p-1$, then $G$ has a derangement.
\end{proposition}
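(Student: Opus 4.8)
The plan is to assume that $G$ has no derangement and to derive a contradiction by exhibiting one. Since the primitive case is already known (item (i) of the introduction), we may assume that $G$ acts imprimitively on each $\Omega_i$. As $q<p$, \Cref{NewBounds} (with $b=q$) shows that a Sylow $p$-subgroup $P$ of $G$ is elementary abelian, say $P\cong\mathbb{F}_p^d$, and that $P$ has exactly $q$ orbits of length $p$ on each $\Omega_i$; the point-stabilisers $P_\omega$ are then hyperplanes of $P$. If $P$ itself contains a derangement we are done, so the task is to manufacture one using an element of order $q$.

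The heart of the argument is the following construction, which I would carry out assuming the existence of an element $v\in N_G(P)$ of order $q$. Write $P=C_P(v)\oplus[P,v]$ for the decomposition into fixed and non-fixed parts. Because $q\nmid p-1$, the field $\mathbb{F}_p$ contains no nontrivial $q$-th root of unity, so the only eigenvalue of $v$ in $\mathbb{F}_p$ (acting on $P$, or dually on $P^*$) is $1$; consequently every $v$-invariant hyperplane of $P$ contains $[P,v]$. If $C_P(v)=\{0\}$ then $v$ has no invariant hyperplane, so $\operatorname{Fix}(v)=\varnothing$ and $v$ is a derangement. Otherwise, for any $\omega\in\operatorname{Fix}(v)$ the stabiliser $P_\omega$ is $v$-invariant (as $P_{v\omega}=vP_\omega v^{-1}=P_\omega$), hence $P_\omega\supseteq[P,v]$; it follows that $C_P(v)\cap P_\omega$ has codimension $1$ in $C_P(v)$ and that the $C_P(v)$-orbit of $\omega$ has length $p$ and lies in $\operatorname{Fix}(v)$. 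Thus $\operatorname{Fix}(v)\cap\Omega_i$ has cardinality divisible by both $p$ (being a union of such orbits) and $q$ (as $v$ has order $q$ on $\Omega_i$, of size $pq$), so $\operatorname{Fix}(v)\cap\Omega_i\in\{0,pq\}$ and hence $\operatorname{Fix}(v)\in\{\varnothing,\Omega_1,\Omega_2,\Omega\}$; since $v\neq 1$, the last possibility is excluded.

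To finish from here: if $\operatorname{Fix}(v)=\varnothing$ then $v$ is a derangement. If instead $\operatorname{Fix}(v)=\Omega_i$, then $C_P(v)\neq\{0\}$ (there are fixed points, hence invariant hyperplanes), and the at most $q$ hyperplanes $\{C_P(v)\cap P_\omega:\omega\in\Omega_i\}$ cannot cover $C_P(v)$: by \Cref{DerangementsInPGroups2} applied over $\mathbb{F}_p$, covering a space of dimension $\geq 2$ by hyperplanes with trivial intersection requires at least $p+1$ of them, which is impossible as $q<p$. Choosing $u\in C_P(v)$ outside all these hyperplanes, the element $u$ commutes with $v$, has order $p$, and fixes no point of $\Omega_i$; since $u$ and $v$ have coprime orders, $\operatorname{Fix}(uv)=\operatorname{Fix}(u)\cap\operatorname{Fix}(v)=\operatorname{Fix}(u)\cap\Omega_i=\varnothing$, so $uv$ is a derangement. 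In every case we contradict the assumption, completing the argument modulo the existence of $v$.

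I expect the existence of such a $v$---equivalently, that $q$ divides $|N_G(P)|$---to be the main obstacle. The plan here is a Frattini argument: for any $B\trianglelefteq G$ containing $P$ as a Sylow $p$-subgroup we have $G=B\,N_G(P)$, so $q\mid|N_G(P)|$ whenever $q\mid|G/B|$. Taking $B=\langle P^G\rangle$ disposes of the case $q\mid[G:\langle P^G\rangle]$ and otherwise reduces us to $G=\langle P^G\rangle$ transitive on each $\Omega_i$. Using imprimitivity, I would then pass to a block system of $G$ on $\Omega_1$: if it has $q$ blocks of size $p$, then $P$ (a $p$-group acting on $q<p$ blocks) lies in the kernel $B$ of the block action, and since $G/B$ is transitive of degree $q$ we get $q\mid|G/B|$ and hence the required $v$. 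The genuinely hard case is when the only block systems have blocks of size $q$: the induced action on the resulting $p$ blocks is then a transitive group of prime degree $p$ generated by $p$-elements, and $q\nmid p-1$ is decisive here, forcing a cyclic quotient in the affine case and otherwise an almost simple action to which \Cref{WreathProducts} and the proportion bound of \Cref{lemma:proportion} can be applied. I anticipate that this almost simple block action is where the real difficulty lies.
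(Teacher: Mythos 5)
Your central construction---producing a derangement from an element $v\in N_G(P)$ of order $q$---is correct, and it is genuinely different from the paper's route. The coprime decomposition $P=C_P(v)\oplus[P,v]$, the observation that $q\nmid p-1$ forces every $v$-invariant hyperplane of $P$ to contain $[P,v]$, the divisibility argument giving $\operatorname{Fix}(v)\in\{\varnothing,\Omega_1,\Omega_2\}$, and the covering argument yielding $u\in C_P(v)$ with $uv$ a derangement are all sound (one small point: to invoke \Cref{DerangementsInPGroups2} when the hyperplanes $C_P(v)\cap P_\omega$ have nontrivial common intersection, quotient by that intersection first; alternatively just cite the standard fact that a vector space over $\mathbb{F}_p$ is not a union of $p$ or fewer proper subspaces). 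The paper does none of this: after reducing to block systems with $q$ blocks of size $p$, it applies Burnside's theorem to the block action $(G^{\Omega_i})_B^B$, kills the almost simple case with \Cref{WreathProducts} and the dimension bound from \Cref{NewBounds}, and in the affine case finds a derangement inside a Sylow $q$-subgroup $Q$ by showing $Q$ has at most two distinct point stabilisers.

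The genuine gap is exactly where you flagged it: the existence of $v$. Your Frattini argument is fine when $G^{\Omega_1}$ (or $G^{\Omega_2}$) admits a block system with $q$ blocks of size $p$, but in the complementary case your plan is not a proof. The claim that $q\nmid[G:\langle P^G\rangle]$ ``reduces us to $G=\langle P^G\rangle$'' is a non sequitur, and the ensuing discussion of an almost simple action on $p$ blocks, to be handled by \Cref{WreathProducts} and \Cref{lemma:proportion}, is speculation with no argument behind it. What you are missing is that this ``hard case'' is vacuous under the no-derangement assumption, by an argument already available to you. By \Cref{NewBounds}, $2\leq d\leq|\{P_\omega\mid\omega\in\Omega\}|-p+1$. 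If $G^{\Omega_i}$ preserved $p$ blocks of size $q$, then $G^{\Omega_i}$ embeds in $S_q\wr S_p$, whose Sylow $p$-subgroups have order $p$ (as $q<p$), so $|P^{\Omega_i}|\leq p$; since all $P$-orbits on $\Omega_i$ have length $p$, this forces all the stabilisers $P_\omega$ with $\omega\in\Omega_i$ to coincide, whence $|\{P_\omega\mid\omega\in\Omega\}|\leq q+1$ and $d\leq q-p+2<2$, a contradiction. (This is precisely the paper's first step: $|P^{\Omega_i}|\geq p^2$ for both $i$.) With this observation, every nontrivial block system on either orbit has $q$ blocks of size $p$, your Frattini argument produces $v$, and your construction finishes the proof. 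Indeed, patched this way, your approach gives a complete argument that is arguably shorter than the paper's, since it avoids Burnside's theorem and \Cref{WreathProducts} entirely; but as written, the proposal is incomplete.
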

\begin{proof}
Assume for a contradiction that $G$ does not have a derangement. Let $P$ be a Sylow $p$-subgroup of $G$ and write $|P|=p^d$. By~\Cref{NewBounds}, $P$ has $2q$ orbits of length $p$, $P$ is elementary abelian, $|\{P_\omega\mid\omega\in\Omega\}|\leq 2q$ and  $2\leq d\leq |\{P_\omega\mid\omega\in\Omega\}|-p+1\leq 2q-p+1$.

Suppose first that $|P^{\Omega_i}|\leq p$ for some $i\in\{1,2\}$. This implies that $P_\omega=P_{\omega'}$ for every $\omega,\omega'\in\Omega_i$, namely $|\{P_\omega\mid\omega\in\Omega_i\}|=1$, hence $|\{P_\omega\mid\omega\in\Omega\}|\leq q+1$, contradicting the facts that $2\leq d \leq |\{P_\omega\mid\omega\in\Omega\}|-p+1$ and $q<p$. Therefore, $|P^{\Omega_i}|\geq p^2$ for both $i\in\{1,2\}$.

By \cite[Corollary~4]{harper_ellis}, each $G^{\Omega_i}$ is imprimitive, and so preserves a block system consisting of either $p$ blocks of size $q$ or $q$ blocks of size $p$. In the former case, $G^{\Omega_i} \leq S_q \wr S_p$, so $|P^{\Omega_i}|\leq p$, contradicting $|P^{\Omega_i}|\geq p^2$. (Here we abuse notation and write $\leq$ to mean ``is isomorphic to a subgroup of''.) Therefore, each $G^{\Omega_i}$ preserves a system $\mathcal{B}_i$ of $q$ blocks of size $p$, so $G^{\Omega_i} \leq S_p \wr S_q$. Let $B\in\mathcal{B}_i$. Then $G^{\Omega_i}\leq (G^{\Omega_i})_B^B\wr (G^{\Omega_i})^{\mathcal{B}_i}$, with $(G^{\Omega_i})_B^B$ a transitive subgroup of $\text{Sym}(B) \cong S_p$ and $(G^{\Omega_i})^{\mathcal{B}_i}$ a transitive subgroup of $\text{Sym}(\mathcal{B}_i) \cong S_q$. Since $p$ is prime, it follows from classical results of Burnside \cite[Chapter~IX, Theorem~IX]{Burnside} that either $(G^{\Omega_i})_B^B \leq \AGL(1,p)$ or $(G^{\Omega_i})_B^B$ is almost simple. 

Suppose first that $(G^{\Omega_i})_B^B$ is almost simple, and note that the socle $T$ of $(G^{\Omega_i})_B^B$ is transitive and, in particular, $p$ divides $|T|$. Let $K=\soc ((G^{\Omega_i})_B^B\wr S_q)\cong T^q$. By~\Cref{WreathProducts}, either $G^{\Omega_i}\cap K=1$, $G^{\Omega_i}\cap K\cong T$ or $K\leq G^{\Omega_i}$. If $G^{\Omega_i}\cap K=1$ or $G^{\Omega_i}\cap K\cong T$, then  $|P^{\Omega_i}|\leq p$, contradicting $|P^{\Omega_i}|\geq p^2$. If $K\leq G^{\Omega_i}$, then $p^q$ divides $|G^{\Omega_i}|$ and $q\leq d$, contradicting $d \leq 2q-p+1$, $q<p$ and $q\neq p-1$.

We must therefore have $(G^{\Omega_i})_B^B \leq \AGL(1,p)$, hence $G^{\Omega_i}\leq \AGL(1,p)\wr S_q$, for both $i\in\{1,2\}$. Let $Q$ be a Sylow $q$-subgroup of $G$. Since $q$ does not divide $p-1$, it does not divide $|\AGL(1,p)|=p(p-1)$, hence $|Q^{\Omega_i}|=q$. It follows from \Cref{friends} that all orbits of $Q^{\Omega_i}$  have length $q$ and thus $|Q:Q_\omega|=q$ for every $\omega\in\Omega$. Let $\omega_i\in\Omega_i$ and $g\in Q_{\omega_i}$. Note that $g$ stabilises the block of $\mathcal{B}_i$ containing $\omega_i$, and hence, since $g$ is a $q$-element, $g$ must also stabilise the remaining $q-1$ blocks of $\mathcal{B}_i$. It follows that $g^{\Omega_i}\in\AGL(1,p)^q$, but $q$ does not divide $|\AGL(1,p)|$, hence  $g^{\Omega_i}=1$. This implies that $g\in Q_\omega$ for every $\omega\in\Omega_i$, so $Q_{\omega_i} \leq Q_\omega$ for every $\omega \in \Omega_i$. Since $|Q_\omega|=|Q_{\omega_i}|$, we have $Q_\omega=Q_{\omega_i}$ for every $\omega\in\Omega_i$. This implies that $Q$ has at most two distinct point stabilisers, so it must contain a derangement (because it cannot equal the union of two proper subgroups).
\end{proof}

\begin{corollary} \label{mainCor}
If $n=pq$ with $p,q$ primes, then $G$ has a derangement.
\end{corollary}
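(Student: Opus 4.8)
The plan is to deduce \Cref{mainCor} from the three results already in hand---\Cref{computations}, \Cref{NewBounds} and \Cref{LastDay}---together with the known case of the conjecture in which $n$ is a prime power (item~(iii) of the introduction). The argument is a short case analysis, so the only real task is to check that these results genuinely cover every possibility. First I would dispose of the case $p=q$: then $n=p^2$ is a prime power, and $G$ has a derangement by the prime-power case of Ellis and Harper. Hence I may assume $p\neq q$ and, relabelling if necessary, $p>q$.

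Next I would split according to whether $q$ divides $p-1$. If $q\nmid p-1$, then \Cref{LastDay} applies verbatim and we are done. So suppose instead that $q\mid p-1$, the precise situation left open by \Cref{LastDay}. Here the key move is to write $n=qp$ with $b=q<p$ and invoke \Cref{NewBounds}: its final clause produces a derangement in a Sylow $p$-subgroup of $G$---and therefore in $G$ itself---as soon as $q<(p+1)/2$, equivalently $p\geq 2q$.

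It remains to treat $p<2q$, and this is where the arithmetic becomes decisive. Combined with $p>q$ this gives $q<p<2q$, so $p-1$ is a multiple of $q$ lying in the interval $[q,2q)$; the only such multiple is $q$ itself, forcing $p-1=q$, i.e.\ $p=q+1$. Consecutive integers that are both prime can only be $q=2$ and $p=3$, so $n=6$, which is handled by \Cref{computations}. I expect no serious obstacle in this proof: the content lies entirely in the preceding propositions, and the one point to verify carefully is that the threshold $(p+1)/2$ appearing in \Cref{NewBounds} meshes with the hypothesis $q\mid p-1$ so tightly that the single value $n=6$ is all that escapes both \Cref{NewBounds} and \Cref{LastDay}---and that this value lies safely inside the computational range of \Cref{computations}.
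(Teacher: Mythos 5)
Your proof is correct and takes essentially the same route as the paper's: both split on whether $q$ divides $p-1$, using \Cref{LastDay} when it does not, \Cref{NewBounds} when $q\mid p-1$ with $q\leq (p-1)/2$ (your condition $p\geq 2q$ is equivalent), and \Cref{computations} for the residual case $p-1=q$, which forces $(p,q)=(3,2)$ and $n=6$. The only cosmetic difference is the case $p=q$, where you invoke Ellis and Harper's prime-power result while the paper cites \Cref{NewBounds}.
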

\begin{proof}
If $p=q$, then the result follows from \Cref{NewBounds}, so we assume that $p>q$. If $q$ does not divide $p-1$, then the result follows from~\Cref{LastDay}, so we  assume  that $q$ divides $p-1$. If $q=p-1$, then $p=3$ and $q=2$ and the result follows from~\Cref{computations}. Otherwise, $q\leq (p-1)/2$ and the result follows from~\Cref{NewBounds}.
\end{proof}

\section*{Acknowledgements}
Melissa Lee acknowledges the support of an Australian Research Council Discovery Early Career Researcher Award (project number DE230100579). Melissa Lee and Tomasz Popiel are grateful to Gabriel Verret and The University of Auckland for their hospitality during a research visit in January 2025, when this work was initiated.

\bibliographystyle{plain}
\bibliography{refs}

\end{document}